\documentclass[twocolumn]{article}
\usepackage{dsfont,amsmath,amssymb,amsthm}

%% macros
\newcommand{\lcp}{\mathrm{LCP}}
\newcommand{\R}{\mathds{R}}

\newcommand{\NP}{\mathrm{NP}}

\newcommand{\Or}{{\cal O}}

%% theorems
\newtheorem{definition}{Definition}[section]
\newtheorem{lem}[definition]{Lemma}

\newtheorem{thm}[definition]{Theorem}

\author{B. G\"artner and M. Sprecher\thanks{Institute of Theoretical
    Computer Science, ETH Zurich, CH-8092 Zurich, Switzerland
    (\texttt{gaertner@inf.ethz.ch})}} \title{A Polynomial-Time
  Algorithm for the Tridiagonal and Hessenberg P-Matrix Linear
  Complementarity Problem}

\begin{document}
\maketitle

\begin{abstract}
  We give a polynomial-time dynamic programming algorithm for solving
  the linear complementarity problem with tridiagonal or, more
  generally, Hessenberg P-matrices. We briefly review three known
  tractable matrix classes and show that none of them contains all
  tridiagonal P-matrices.
\end{abstract}

\section{Introduction}
Given a matrix $M\in\R^{n\times n}$ and a vector $q\in\R^n$, the
\emph{linear complementarity problem} $\lcp(M,q)$ is to find vectors
$w,z\in\R^n$ such that
\begin{equation}\label{eq:lcpdef}
w-Mz = q, \quad w,z\geq 0, \quad w^Tz = 0.
\end{equation}
It is $\NP$-complete in general to decide whether such vectors exist
\cite{Chung:Hardness}. But if $M$~is a \emph{P-matrix} (meaning that all
principal minors---determinants of principal submatrices---are
positive), then there are unique solution vectors $\tilde w,\tilde z$
for every right-hand side $q$~\cite{STW}. It is unknown whether these
vectors can be found in polynomial
time~\cite{Meg:A-Note-on-the-Complexity}.

The matrix $M=(m_{ij})_{i,j=1}^n$~is \emph{tridiagonal} if $m_{ij}=0$
for $|j-i|>1$. More generally, $M$ is \emph{lower Hessenberg} if
$m_{ij}=0$ for $j-i>1$, and $M$ is \emph{upper Hessenberg} if $M^T$ is
lower Hessenberg; see Figure~\ref{fig:matrices}.

\begin{figure}[htb]
\begin{center}
\begin{picture}(140,60)
% first matrix
% grid
\linethickness{0.5pt}
\put(0,0){\line(1,0){60}}
\put(0,10){\line(1,0){60}}
\put(0,20){\line(1,0){60}}
\put(0,30){\line(1,0){60}}
\put(0,40){\line(1,0){60}}
\put(0,50){\line(1,0){60}}
\put(0,60){\line(1,0){60}}
\put(0,0){\line(0,1){60}}
\put(10,0){\line(0,1){60}}
\put(20,0){\line(0,1){60}}
\put(30,0){\line(0,1){60}}
\put(40,0){\line(0,1){60}}
\put(50,0){\line(0,1){60}}
\put(60,0){\line(0,1){60}}
% nonzero elements
\linethickness{2pt}
% upper chain
\put(0,60){\line(1,0){20}}
\put(20,60){\line(0,-1){10}}
\put(20,50){\line(1,0){10}}
\put(30,50){\line(0,-1){10}}
\put(30,40){\line(1,0){10}}
\put(40,40){\line(0,-1){10}}
\put(40,30){\line(1,0){10}}
\put(50,30){\line(0,-1){10}}
\put(50,20){\line(1,0){10}}
\put(60,20){\line(0,-1){20}}
% lower chain
\put(0,60){\line(0,-1){20}}
\put(0,40){\line(1,0){10}}
\put(10,40){\line(0,-1){10}}
\put(10,30){\line(1,0){10}}
\put(20,30){\line(0,-1){10}}
\put(20,20){\line(1,0){10}}
\put(30,20){\line(0,-1){10}}
\put(30,10){\line(1,0){10}}
\put(40,10){\line(0,-1){10}}
\put(40,0){\line(1,0){20}}
% second matrix 
% grid
\linethickness{0.5pt}
\put(80,0){\line(1,0){60}}
\put(80,10){\line(1,0){60}}
\put(80,20){\line(1,0){60}}
\put(80,30){\line(1,0){60}}
\put(80,40){\line(1,0){60}}
\put(80,50){\line(1,0){60}}
\put(80,60){\line(1,0){60}}
\put(80,0){\line(0,1){60}}
\put(90,0){\line(0,1){60}}
\put(100,0){\line(0,1){60}}
\put(110,0){\line(0,1){60}}
\put(120,0){\line(0,1){60}}
\put(130,0){\line(0,1){60}}
\put(140,0){\line(0,1){60}}
% nonzero elements
\linethickness{2pt}
% upper chain
\put(80,60){\line(1,0){20}}
\put(100,60){\line(0,-1){10}}
\put(100,50){\line(1,0){10}}
\put(110,50){\line(0,-1){10}}
\put(110,40){\line(1,0){10}}
\put(120,40){\line(0,-1){10}}
\put(120,30){\line(1,0){10}}
\put(130,30){\line(0,-1){10}}
\put(130,20){\line(1,0){10}}
\put(140,20){\line(0,-1){20}}
% lower chain
\put(80,0){\line(1,0){60}}
\put(80,0){\line(0,1){60}}
\end{picture}
\end{center}
\caption{A tridiagonal matrix (left) and a lower Hessenberg matrix (right);
the nonzero entries are enclosed in bold lines.\label{fig:matrices}}
\end{figure}
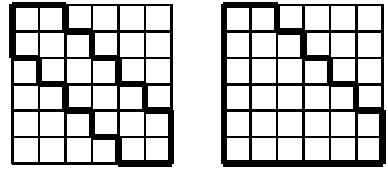

In this note we show that $\lcp(M,q)$ can be solved in polynomial time
if $M$ is a lower (or upper) Hessenberg P-matrix. Polynomial-time
results already exist for other classes of matrices, most notably
Z-matrices~\cite{Chandra}, hidden Z-matrices~\cite{Manga}, and
transposed hidden K-matrices~\cite{Pang}. Section~\ref{sec:tridiag}
shows that none of these classes contains all tridiagonal P-matrices.

For the remainder of this note, we fix a P-matrix $M\in\R^{n\times n}$
and a vector $q\in\R^{n}$.

\section{The optimal basis}

For $B\subseteq [n]:=\{1,2,\ldots,n\}$, we let $\overline{M}_B$ be the
$n\times n$ matrix whose $i$th column is the $i$th column of~$-M$ if
$i\in B$, and the $i$th column of the $n\times n$ identity
matrix~$I_n$ otherwise. $\overline{M}_B$ is invertible for every set
$B$, a direct consequence of $M$ having nonzero principal minors. We
call $B$ a \emph{basis} and $\overline{M}_B$ the associated
\emph{basis matrix}.

The \emph{complementary pair} $(w(B),z(B))$ associated with the basis 
$B$ is defined by
\begin{equation}\label{eq:lcpw}
w_i(B) := \begin{cases}
0 & \text{if $i\in B$} \\
(\overline{M}_B^{-1}q)_i & \text{if $i\notin B$}
\end{cases}
\end{equation}
and 
\begin{equation}\label{eq:lcpz} 
z_i(B) := \begin{cases}
(\overline{M}_B^{-1}q)_i & \text{if $i\in B$}\\
0 & \text{if $i\notin B$}
\end{cases},
\end{equation}
for all $i\in [n]$.

\begin{lem}\label{lem:optB} 
  For every basis $B\subseteq[n]$, the following two statements are
  equivalent. 
  \begin{itemize}
  \item[(i)] The pair $(w(B),z(B))$ solves $\lcp(M,q)$, meaning that
    $w=w(B), z=z(B)$ satisfy (\ref{eq:lcpdef}).
  \item[(ii)] $\overline{M}_B^{-1}q\geq 0$.
  \end{itemize}
\end{lem}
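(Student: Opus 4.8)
The plan is to observe that three of the conditions hidden in the claimed equivalence are essentially built into the definitions, so that only a single genuine requirement remains. Write $v:=\overline{M}_B^{-1}q$, so that $\overline{M}_B v=q$. First I would expand $\overline{M}_B v$ column by column: the $i$th column of $\overline{M}_B$ is $-M\unit_i$ when $i\in B$ and $\unit_i$ when $i\notin B$, where $\unit_i$ is the $i$th standard unit vector. Hence
\[
\overline{M}_B v \;=\; -M\Big(\sum_{i\in B} v_i\,\unit_i\Big) \;+\; \sum_{i\notin B} v_i\,\unit_i .
\]
By the definitions (\ref{eq:lcpw}) and (\ref{eq:lcpz}), the first sum is exactly $z(B)$ and the second is exactly $w(B)$, so $\overline{M}_B v = w(B)-M z(B)$. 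Thus $w=w(B),\,z=z(B)$ satisfy the linear equation $w-Mz=q$ of (\ref{eq:lcpdef}) \emph{for every} basis $B$, with no assumption on $q$.

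Second, I would note that the complementarity condition $w(B)^{T}z(B)=0$ also holds for every basis: for each $i$, either $i\in B$, which forces $w_i(B)=0$, or $i\notin B$, which forces $z_i(B)=0$; in either case the product $w_i(B)z_i(B)$ vanishes, so the whole sum is $0$. Consequently, of the three parts of (\ref{eq:lcpdef}), only the nonnegativity requirements $w(B)\geq 0$ and $z(B)\geq 0$ can possibly fail.

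Finally I would identify those remaining requirements with statement (ii). The vector $w(B)$ agrees with $v$ on the coordinates outside $B$ and is $0$ on the coordinates inside $B$, while $z(B)$ agrees with $v$ on the coordinates inside $B$ and is $0$ outside. Therefore $w(B)\geq 0$ and $z(B)\geq 0$ both hold if and only if $v_i\geq 0$ for every $i\in[n]$, i.e.\ $\overline{M}_B^{-1}q\geq 0$. Chaining the three observations yields (i)$\,\Leftrightarrow\,$(ii).

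I do not expect a real obstacle here; the only points requiring care are the sign convention in the definition of $\overline{M}_B$ (its columns on $B$ come from $-M$, not $M$) and the bookkeeping of which coordinates of $v$ are routed into $w(B)$ as opposed to $z(B)$. The invertibility of $\overline{M}_B$, already recorded in the text, is what makes $v$ well defined in the first place, but beyond that it plays no role in this particular equivalence.
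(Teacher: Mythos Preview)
Your proof is correct and follows exactly the same approach as the paper: you verify that $w(B)-Mz(B)=q$ and $w(B)^T z(B)=0$ hold automatically from the definitions, leaving only the nonnegativity condition, which is then identified with $\overline{M}_B^{-1}q\geq 0$. The paper states these same three observations in a single terse sentence, whereas you spell out the column-by-column computation of $\overline{M}_B v$; but the argument is identical.
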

If both statements hold, $B$ is called an \emph{optimal} basis for
$\lcp(M,q)$.

\begin{proof}
  As a consequence of (\ref{eq:lcpw}) and (\ref{eq:lcpz}), $w=w(B)$
  and $z=z(B)$ already satisfy $w-Mz=q$ and $w^Tz=0$, for every $B$.
  Moreover,  $w,z\geq 0$ if and only if $w(B),z(B)\geq 0$;
  this in turn is equivalent to $\overline{M}_B^{-1}q\geq 0$.
\end{proof}

From now on, we assume w.l.o.g.\ that $\lcp(M,q)$ is
\emph{nondegenerate}, meaning that $(\overline{M}_B^{-1}q)_i\neq 0$
for all $B\subseteq [n]$ and all $i\in [n]$. We can achieve this e.g.\
through a symbolic perturbation of $q$. In this case, we obtain the
following

\begin{lem}\label{lem:uniqueopt} 
  There is a unique optimal basis $\tilde B$ for $\lcp(M,q)$.
\end{lem}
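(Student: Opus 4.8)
The plan is to establish existence and uniqueness separately, in both cases using Lemma~\ref{lem:optB} to pass between bases and solution vectors, together with the classical fact~\cite{STW} that $\lcp(M,q)$ has exactly one solution $(\tilde w,\tilde z)$.

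\emph{Existence.} By~\cite{STW} there is a solution $(w,z)$ of $\lcp(M,q)$. I would set $B:=\{i\in[n]:z_i\neq 0\}$. Since $w,z\geq 0$ and $w^Tz=0$, complementarity holds coordinatewise, so $w_i=0$ for $i\in B$ and $z_i=0$ for $i\notin B$. Hence the vector $x$ with $x_i:=z_i$ for $i\in B$ and $x_i:=w_i$ for $i\notin B$ satisfies $\overline{M}_Bx=w-Mz=q$: the $i$th column of $\overline{M}_B$ scaled by $x_i$ contributes the $i$th summand of $-Mz$ when $i\in B$ and the $i$th summand of $w$ when $i\notin B$. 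Therefore $\overline{M}_B^{-1}q=x\geq 0$, and Lemma~\ref{lem:optB} makes $B$ an optimal basis. (Running the same computation backwards shows $(w(B),z(B))=(w,z)$, so this is in fact a bijection between solutions and optimal bases, but existence is all that is needed here.)

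\emph{Uniqueness.} Suppose $B_1$ and $B_2$ are both optimal. By Lemma~\ref{lem:optB}, both $(w(B_1),z(B_1))$ and $(w(B_2),z(B_2))$ solve $\lcp(M,q)$, so the uniqueness part of~\cite{STW} gives $w(B_1)=w(B_2)$ and $z(B_1)=z(B_2)$. If $B_1\neq B_2$, pick $i\in B_1\setminus B_2$ (the case $i\in B_2\setminus B_1$ is symmetric). Then $z_i(B_2)=0$ because $i\notin B_2$, whereas $z_i(B_1)=(\overline{M}_{B_1}^{-1}q)_i$; equating the two $z$-vectors forces $(\overline{M}_{B_1}^{-1}q)_i=0$, contradicting nondegeneracy. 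Hence $B_1=B_2$.

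I do not expect a genuine obstacle: the argument reduces entirely to Lemma~\ref{lem:optB}, the single-solution property of P-matrix LCPs~\cite{STW}, and nondegeneracy. The one step needing a little care is the column-by-column rewriting of $w-Mz=q$ as $\overline{M}_Bx=q$ in the existence part, since that is the only place the definition of $\overline{M}_B$ enters directly.
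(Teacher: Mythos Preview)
Your proof is correct and follows essentially the same route as the paper's: both establish existence by extracting a basis from the unique \cite{STW} solution $(w,z)$ and verifying optimality via Lemma~\ref{lem:optB}, and both derive uniqueness by observing that two optimal bases yield the same solution pair, so any index in their symmetric difference would force a zero entry of $\overline{M}_B^{-1}q$, contradicting nondegeneracy. The only cosmetic difference is that the paper takes $\tilde B=\{i:\tilde w_i=0\}$ while you take $B=\{i:z_i\neq 0\}$; either choice works and the rest of the argument is the same.
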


\begin{proof}
  Let $\tilde w,\tilde z$ be solution vectors of $\lcp(M,q)$, and set
  $\tilde B:=\{i\in [n]: \tilde w_i=0\}$. Since $\tilde w^T\tilde
  z=0$, we have $\tilde z_i=0$ if $i\in [n]\setminus \tilde B$. Hence,
  the vectors $\tilde w,\tilde z$ satisfy
  \[q=(I_n\mid -M)\left(\begin{array}{c}\tilde w\\ \tilde z
\end{array}\right)=
\overline{M}_{\tilde B}\left(\begin{array}{c}\tilde w_{[n]\setminus \tilde B}\\
    \tilde z_{\tilde B}\end{array}\right),\] so $\tilde w = w(\tilde
B),\tilde z=z(\tilde B)$ follows. Hence, $\tilde B$ satisfies
statement (i) in Lemma~\ref{lem:optB} and is therefore an optimal
basis.
 
Uniqueness of $\tilde w,\tilde z$~\cite{STW} implies via
Lemma~\ref{lem:optB} that $(w(B),z(B))=(w(\tilde B),z(\tilde B))$ for
every optimal basis $B$. But then (\ref{eq:lcpw}) and (\ref{eq:lcpz})
show that $(\overline{M}_{\tilde
  B}^{-1}q)_i=(\overline{M}_{B}^{-1}q)_i=0$ for all $i\in \tilde
B\oplus B$. Under nondegeneracy, there can be no such $i$, hence
$B=\tilde B$.
\end{proof}

\section{Subproblems}
For $K\subseteq[n]$, let $M_{KK}$ be the principal submatrix of $M$
consisting of all entries $m_{ij}$ with $i,j\in K$. Furthermore, let
$q_K$ be the subvector of $q$ consisting of all entries $q_i,i\in K$.

By definition, the submatrix $M_{KK}$ is also a P-matrix, and
$\lcp(M_{KK},q_K)$ is easily seen to inherit nondegeneracy from
$\lcp(M,q)$. Hence, Lemma~\ref{lem:uniqueopt} allows us to make the
following

\begin{definition}
  For $k\in[n]$, $B(k)\subseteq[k]$ is the unique optimal basis of
  $\lcp(M_{[k][k]},q_{[k]})$.
\end{definition}
We also set $B(-1)=B(0)=\emptyset$.

\section{The lower Hessenberg case}\label{sec:4}
Let $M$ be a lower Hessenberg matrix. Then we have the following
\begin{thm}\label{thm:dyn}
For every $k\in[n]$, there exists an index $\ell\in\{-1,0,\ldots,k-1\}$ such that
\[B(k) = B(\ell)\cup\{\ell+2,\ell+3,\ldots,k\}.\]
\end{thm}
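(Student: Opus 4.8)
The plan is to understand how the optimal basis $B(k)$ of the size-$k$ subproblem relates to those of smaller subproblems, exploiting the lower Hessenberg structure. Fix $k$ and write $\tilde B = B(k)$. The key structural fact I want to extract is that the "last block" of $\tilde B$ is an interval ending at $k$: that is, if I let $\ell+1$ be the smallest index such that $\{\ell+2,\ell+3,\ldots,k\}\subseteq\tilde B$ while (if $\ell\ge 1$) $\ell+1\notin\tilde B$, then I must show $\tilde B\cap[\ell] = B(\ell)$. Equivalently, $\tilde B = B(\ell)\cup\{\ell+2,\ldots,k\}$ with $B(\ell)$ itself optimal for the smaller problem; the index $\ell$ is then the one asserted by the theorem (the case $\ell=-1$ corresponds to $\tilde B=\emptyset$ after dropping... more precisely $\ell=k-1$ when $k\notin\tilde B$, and $\ell=-1$ when $\tilde B=[k]$).

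First I would set up the block structure of $\overline{M}_{\tilde B}$ when $M$ is lower Hessenberg. The crucial observation is that for $i\notin\tilde B$ with $i<\ell+1$, the $i$th column of $\overline{M}_{\tilde B}$ is $\unit_i$, and for $i>\ell+1$ we are in the all-$\tilde B$ block where columns come from $-M$; because $M$ is lower Hessenberg, the column $-M\unit_j$ for $j\ge\ell+2$ has zero entries in rows $1,\ldots,\ell$ (since $m_{ij}=0$ when $j-i>1$ forces... here I need $j - i > 1$, i.e. entries above the superdiagonal vanish, so column $j$ has its topmost nonzero entry in row $j-1\ge\ell+1$). Combined with the fact that columns $\unit_i$ for the index set $[\ell]\setminus\tilde B$ and the $-M$ columns for $\tilde B\cap[\ell]$ only touch rows $1,\ldots,\ell+1$, I get that $\overline{M}_{\tilde B}$, after the obvious row/column reordering, is block lower triangular: the top-left $\ell\times\ell$ block is exactly $\overline{(M_{[\ell][\ell]})}_{\tilde B\cap[\ell]}$, and the bottom-right block is governed by the interval $\{\ell+1,\ldots,k\}$. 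Hence $(\overline{M}_{\tilde B}^{-1}q_{[k]})_i$ for $i\in[\ell]$ equals $(\overline{(M_{[\ell][\ell]})}_{\tilde B\cap[\ell]}^{-1}q_{[\ell]})_i$.

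From there the argument closes quickly. Lemma~\ref{lem:optB}(ii) applied to the size-$k$ problem gives $\overline{M}_{\tilde B}^{-1}q_{[k]}\ge 0$; restricting to the first $\ell$ coordinates and using the block-triangular identity above shows $\overline{(M_{[\ell][\ell]})}_{\tilde B\cap[\ell]}^{-1}q_{[\ell]}\ge 0$, so by Lemma~\ref{lem:optB} again $\tilde B\cap[\ell]$ is an optimal basis for $\lcp(M_{[\ell][\ell]},q_{[\ell]})$, and by uniqueness (Lemma~\ref{lem:uniqueopt}) it must equal $B(\ell)$. This yields $B(k)=B(\ell)\cup\{\ell+2,\ldots,k\}$ as claimed. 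I would handle the degenerate endpoints separately: if $k\notin\tilde B$ take $\ell=k-1$ (the set $\{\ell+2,\ldots,k\}$ is empty and the claim is $B(k)=B(k-1)$, which the same block argument gives with a trivial bottom block); if $\tilde B=[k]$ take $\ell=-1$.

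The main obstacle I anticipate is making the block-triangularity rigorous, in particular pinning down exactly which rows the superdiagonal entries in columns $\ell+2,\ldots,k$ can reach and verifying that row $\ell+1$ is the only "coupling" row between the two blocks — and checking that this coupling does not obstruct inverting the blocks independently (it does not, because the relevant matrix is lower block triangular, not merely block structured, so the top block's inverse is computed with no reference to the bottom block). A secondary subtlety is bookkeeping the indices that are not in $\tilde B$: one must confirm that the identity columns $\unit_i$ with $i\le\ell$ land entirely in the top block and that $\ell+1$, whether or not it lies in $\tilde B$, can be absorbed into the bottom block. Once the permutation witnessing block lower-triangular form is written down explicitly, the rest is a routine application of the two lemmas already proved.
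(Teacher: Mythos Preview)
Your approach is essentially the paper's: choose $\ell$ as the largest index with $\ell+1\notin B(k)$ (equivalently, so that $\{\ell+2,\ldots,k\}\subseteq B(k)$ but $\ell+1\notin B(k)$), observe that the basis matrix $\overline{M_{[k][k]}}_{B(k)}$ has a vanishing top-right block (rows $i\le\ell$, columns $j>\ell$), so the first $\ell$ equations form the subsystem $\overline{M_{[\ell][\ell]}}_{B(k)\cap[\ell]}\,x_{[\ell]}=q_{[\ell]}$ and certify via Lemmas~\ref{lem:optB} and~\ref{lem:uniqueopt} that $B(k)\cap[\ell]=B(\ell)$. One small correction: the $-M$ columns for indices $j\in\tilde B\cap[\ell]$ do \emph{not} ``only touch rows $1,\ldots,\ell+1$'' --- a lower Hessenberg column $j$ can be nonzero in every row $i\ge j-1$ --- but this is harmless, since block lower-triangularity requires only the top-right block to vanish, and that part of your argument is correct.
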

\begin{proof}
  If $B(k)=[k]$, the statement holds with index $\ell=-1$. Otherwise,
  let $\ell\in\{0,1,\ldots,k-1\}$ be the largest index such that
  $\ell+1\notin B(k)$.  The matrix $M_{[k][k]}$ is lower Hessenberg as
  well, which implies that the basis matrix
  $\overline{M}:=\overline{M_{[k][k]}}_{B(k)}$ associated with $B(k)$
  satisfies $\overline{m}_{ij}=0$ if $i\leq\ell< j$; see
  Figure~\ref{fig:blocks}.
\begin{figure}[htb]
\begin{center}
\begin{picture}(140,60)
% first matrix
% grid
\linethickness{0.5pt}
\put(0,0){\line(1,0){60}}
\put(0,10){\line(1,0){60}}
\put(0,20){\line(1,0){60}}
\put(0,30){\line(1,0){60}}
\put(0,40){\line(1,0){60}}
\put(0,50){\line(1,0){60}}
\put(0,60){\line(1,0){60}}
\put(0,0){\line(0,1){60}}
\put(10,0){\line(0,1){60}}
\put(20,0){\line(0,1){60}}
\put(30,0){\line(0,1){60}}
\put(40,0){\line(0,1){60}}
\put(50,0){\line(0,1){60}}
\put(60,0){\line(0,1){60}}
% nonzero elements
\linethickness{2pt}
% upper chain
\put(0,60){\line(1,0){20}}
\put(20,60){\line(0,-1){10}}
%\put(20,50){\line(1,0){10}}
%\put(30,50){\line(0,-1){10}}
\put(30,40){\line(1,0){10}}
\put(40,40){\line(0,-1){10}}
\put(40,30){\line(1,0){10}}
\put(50,30){\line(0,-1){10}}
\put(50,20){\line(1,0){10}}
\put(60,20){\line(0,-1){20}}
% shortcuts
\put(20,50){\line(0,-1){10}}
\put(20,40){\line(1,0){10}}
\put(20,30){\line(1,0){10}}
\put(30,30){\line(0,-1){20}}
% lower chain
\put(0,60){\line(0,-1){20}}
\put(0,40){\line(1,0){10}}
\put(10,40){\line(0,-1){10}}
\put(10,30){\line(1,0){10}}
%\put(20,30){\line(0,-1){10}}
%\put(20,20){\line(1,0){10}}
%\put(30,20){\line(0,-1){10}}
\put(30,10){\line(1,0){10}}
\put(40,10){\line(0,-1){10}}
\put(40,0){\line(1,0){20}}
% numbers
\put(23, 2){0}
\put(23, 12){0}
\put(23, 22){0}
\put(23, 32){1}
\put(23, 42){0}
\put(23, 52){0}
% second matrix 
% grid
\linethickness{0.5pt}
\put(80,0){\line(1,0){60}}
\put(80,10){\line(1,0){60}}
\put(80,20){\line(1,0){60}}
\put(80,30){\line(1,0){60}}
\put(80,40){\line(1,0){60}}
\put(80,50){\line(1,0){60}}
\put(80,60){\line(1,0){60}}
\put(80,0){\line(0,1){60}}
\put(90,0){\line(0,1){60}}
\put(100,0){\line(0,1){60}}
\put(110,0){\line(0,1){60}}
\put(120,0){\line(0,1){60}}
\put(130,0){\line(0,1){60}}
\put(140,0){\line(0,1){60}}
% nonzero elements
\linethickness{2pt}
% upper chain
\put(80,60){\line(1,0){20}}
\put(100,60){\line(0,-1){20}}
%\put(100,50){\line(1,0){10}}
%\put(110,50){\line(0,-1){10}}
\put(100,40){\line(1,0){10}}
\put(110,40){\line(1,0){10}}
\put(120,40){\line(0,-1){10}}
\put(120,30){\line(1,0){10}}
\put(130,30){\line(0,-1){10}}
\put(130,20){\line(1,0){10}}
\put(140,20){\line(0,-1){20}}
% lower chain
\put(80,0){\line(1,0){20}}
\put(110,0){\line(1,0){30}}
\put(80,0){\line(0,1){60}}
% additional stuff
\put(100,0){\line(0,1){30}}
\put(110,0){\line(0,1){30}}
\put(100,30){\line(1,0){10}}
% numbers
\put(103, 2){0}
\put(103, 12){0}
\put(103, 22){0}
\put(103, 32){1}
\put(103, 42){0}
\put(103, 52){0}
% arrows
\put(23, -12){$\uparrow$ column $\ell+1$}
\put(103, -12){$\uparrow$ column $\ell+1$}
\end{picture}
\end{center}
\caption{The basis matrix $\overline{M}=\overline{M_{[k][k]}}_{B(k)}$
  in the tridiagonal and lower Hessenberg case if $\ell+1\notin B_k$.
  We have
  $\overline{M}_{[\ell][\ell]}=\overline{M_{[\ell][\ell]}}_{B(k)\cap[\ell]}$.
  \label{fig:blocks}}
\end{figure}
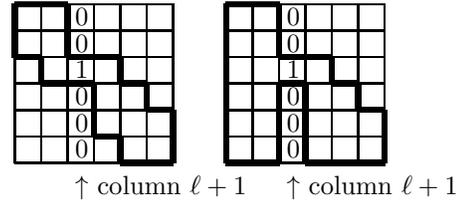

As a consequence, the system of $k$ equations
  \begin{equation}\label{eq:mainsys}
  \overline{M_{[k][k]}}_{B(k)}x=q_{[k]}
  \end{equation}
  includes the $\ell$ equations
  \begin{equation}\label{eq:subsys}
  \overline{M_{[\ell][\ell]}}_{B(k)\cap[\ell]}x_{[\ell]}=q_{[\ell]}.
\end{equation}
Since $B(k)$ is the optimal basis of $\lcp(M_{[k][k]},q_{[k]})$, the
unique solution $\tilde x$ of (\ref{eq:mainsys}) satisfies $\tilde
x\geq 0$; see Lemma~\ref{lem:optB}. Vice versa, the unique partial
solution $\tilde x_{[\ell]}\geq 0$ of subsystem (\ref{eq:subsys})
shows that $B(k)\cap[\ell]=B(\ell)$, the unique optimal basis of
$\lcp(M_{[\ell][\ell]},q_{[\ell]})$. Together with the choice of
$\ell$, the statement of the theorem follows.
\end{proof}

We remark that a variant of Theorem~\ref{thm:dyn} for upper Hessenberg
matrices can be obtained by considering \emph{lower right} principal
submatrices $M_{KK}$.

\section{Polynomial-time algorithm}
A \emph{basis test} is a procedure to decide whether a given basis
$B\subseteq[n]$ is optimal for $\lcp(M,q)$. According to
Lemma~\ref{lem:optB}, a basis test can be implemented in polynomial
time, using Gaussian elimination. In the sequel, we will therefore
adopt the number of basis tests as a measure of algorithmic
complexity. Here is our main result.

\begin{thm}\label{thm:main} 
  Let $M\in\R^{n\times n}$ be a lower Hessenberg P-matrix.
  The optimal basis $\tilde B=B(n)$ of $\lcp(M,q)$ can be found with
  at most $\binom{n+1}{2}$ basis tests.
\end{thm}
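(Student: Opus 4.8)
The plan is to use Theorem~\ref{thm:dyn} as the engine of a dynamic program that computes $B(0), B(1), \ldots, B(n)$ in this order, and to bound the total number of basis tests by a careful accounting argument. First I would observe that by Theorem~\ref{thm:dyn}, for each $k\in[n]$ the basis $B(k)$ is one of only $k+1$ candidates: namely $B(\ell)\cup\{\ell+2,\ldots,k\}$ for $\ell\in\{-1,0,\ldots,k-1\}$. Crucially, once $B(-1)=B(0)=\emptyset$ and $B(1),\ldots,B(k-1)$ have already been determined, each of these candidates is an \emph{explicitly known} subset of $[k]$, so we can apply a basis test (which by Lemma~\ref{lem:optB} amounts to checking $\overline{M_{[k][k]}}_{C}^{-1}q_{[k]}\geq 0$ via Gaussian elimination) to each candidate $C$ in turn until we find one that passes. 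Since $B(k)$ exists and is the unique optimal basis of $\lcp(M_{[k][k]},q_{[k]})$ by Lemma~\ref{lem:uniqueopt}, exactly one candidate passes, and the search for step $k$ terminates after at most $k+1$ basis tests.

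Summing over all stages, the algorithm performs at most $\sum_{k=1}^n (k+1)$ basis tests in the worst case. This evaluates to $\sum_{k=1}^n (k+1) = \frac{n(n+1)}{2} + n = \frac{n^2+3n}{2}$, which is somewhat larger than $\binom{n+1}{2} = \frac{n(n+1)}{2}$, so the naive count is not quite tight enough. The fix is to notice that we need not test the candidate $\ell = k-1$, i.e.\ $B(k-1)\cup\emptyset = B(k-1)$: if all of the candidates for $\ell\in\{-1,0,\ldots,k-2\}$ fail the basis test, then by Theorem~\ref{thm:dyn} the remaining possibility $\ell=k-1$ must be the correct one, so we can simply set $B(k) = B(k-1)$ without a test. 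This reduces the number of tests at stage $k$ from $k+1$ to $k$, giving a total of $\sum_{k=1}^n k = \binom{n+1}{2}$ basis tests, as claimed. (One should double-check the small cases $k=1$: here the only candidates are $\ell=-1$, giving $[1]$, and $\ell=0$, giving $B(0)=\emptyset$; testing just $\ell=-1$ and defaulting to $\ell=0$ costs one test, consistent with the bound.)

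The main obstacle — though it is more bookkeeping than genuine difficulty — is making sure the dynamic program is actually well-defined, i.e.\ that when we reach stage $k$ the bases $B(\ell)$ for $\ell < k$ are already in hand and hence every candidate set is known; this is immediate from processing $k$ in increasing order. A secondary point worth stating is that each individual basis test at stage $k$ operates on the $k\times k$ system \eqref{eq:mainsys} (with $B(k)$ replaced by the candidate set), which is itself a nondegenerate instance as noted in Section~\ref{sec:4}, so Lemma~\ref{lem:optB} applies verbatim and the test correctly identifies optimality. Finally, I would remark that since each basis test runs in polynomial time via Gaussian elimination and we perform $O(n^2)$ of them, the overall running time is polynomial, establishing the headline claim of the paper; the analogous statement for upper Hessenberg matrices follows from the remark after Theorem~\ref{thm:dyn} by working with lower-right principal submatrices instead.
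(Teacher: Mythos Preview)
Your proposal is correct and follows essentially the same approach as the paper's own proof: a dynamic program over $B(0),B(1),\ldots,B(n)$ using Theorem~\ref{thm:dyn}, with the observation that at stage $k$ only $k$ of the $k+1$ candidates need to be tested since the last one is forced if all others fail. The paper states this more tersely (``we already know $B(k)$ after testing $k$ of the candidates'') without singling out which candidate to skip, but the content is identical.
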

\begin{proof}
  We successively compute the optimal bases $B(-1),B(0),\ldots,B(n)$,
  where $B(-1)=B(0)=\emptyset$. To determine $B(k),k>0$, we simply
  test the $k+1$ candidates for $B(k)$ that are given by
  Theorem~\ref{thm:dyn}.  In fact, we already know $B(k)$ after
  testing $k$ of the candidates. This algorithm requires a total of
  $\sum_{k=1}^n k =\binom{n+1}{2}$ basis tests.
\end{proof}
Using an $O(n^3)$ Gaussian elimination procedure, we obtain an $O(n^5)$
algorithm---this is certainly not best possible. Faster algorithms are
available if $M$ is a tridiagonal $Z$-matrix~\cite{Gupta} or
$K$-matrix~\cite{Cryer,Cottle}, but to our knowledge, the above
algorithm is the first one to handle tridiagonal (and lower
Hessenberg) P-matrices in polynomial time. The case of upper Hessenberg
matrices is analogous, see the remark at the end of Section~\ref{sec:4}.

All upper and lower triangular P-matrices are 
hidden Z~\cite{tsatso:generating}, meaning that linear complementarity
problems with triangular P-matrices can be solved in polynomial
time~\cite{Manga}. We can now also handle the ``almost'' triangular
Hessenberg P-matrices. As we show next, there is a significant
combinatorial difference between the two classes.

\section{A tridiagonal example}\label{sec:tridiag}
Let us consider $\lcp(M,q)$ with 
\begin{equation}\label{eq:Mq}
M = \left(\begin{array}{rrrr}
36 & -81 & 0 & 0 \\
147 & 16 & -74 & 0 \\
0 & 114 & 28 & 171 \\
0 & 0 & -33 & 72
\end{array}\right), q = \left(\begin{array}{r} 1 \\ 1 \\ -1 \\ 1
\end{array}\right).
\end{equation}
This linear complementarity problem was found by a computer search,
with the goal of establishing Lemma~\ref{lem:cycle} below.  It can be
checked that $M$ is a tridiagonal P-matrix, but not a Z-matrix (a
matrix with nonpositive off-diagonal entries). To show that some
other known polynomial-time manageable matrix classes fail to contain
all tridiagonal P-matrices, we need a new concept.

\begin{definition}
Let $\Or(M,q)$ be the digraph with vertex set $2^{[n]}$ and arc set  
\[
\{(B, B\oplus\{i\}): B\subseteq [n], i\in [n],
(\overline{M}_B^{-1}q)_i<0\}.
\] 
\end{definition}
This digraph was first studied by Stickney \&
Watson~\cite{StiWat:Digraph-models}. Under nondegeneracy of
$\lcp(M,q)$, it has a unique sink that coincides with the optimal
basis.

\begin{lem}\label{lem:cycle}
  For $M,q$ as in (\ref{eq:Mq}), $\Or(M,q)$ contains the
  directed cycle $\{1,2,3,4\}\rightarrow\{1,2,3\}
  \rightarrow\{1,2\}\rightarrow\{2\}\rightarrow\emptyset\rightarrow
  \{3\}\rightarrow\{3,4\}\rightarrow\{2,3,4\}\rightarrow\{1,2,3,4\}$.
\end{lem}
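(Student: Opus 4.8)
The plan is to check directly, from the definition of $\Or(M,q)$, that each of the eight arcs in the claimed cycle is present. An arc $(B,B\oplus\{i\})$ lies in $\Or(M,q)$ exactly when $(\overline{M}_B^{-1}q)_i<0$, so it suffices to run through the eight bases $B$ that occur as tails of arcs in the cycle, in each case with $i$ the unique element toggled along that arc, and verify the sign of the single coordinate $(\overline{M}_B^{-1}q)_i$. All eight quantities are well defined because $M$ is a P-matrix, so each $\overline{M}_B$ is invertible.

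To keep the computation manageable I would avoid inverting eight $4\times4$ matrices and instead use the complementary-pair formulas (\ref{eq:lcpw})--(\ref{eq:lcpz}): the solution $x$ of $\overline{M}_Bx=q$ satisfies $x_B=-M_{BB}^{-1}q_B$ on the rows indexed by $B$ and $x_{[n]\setminus B}=q_{[n]\setminus B}+M_{[n]\setminus B,\,B}\,x_B$ on the remaining rows. Hence $(\overline{M}_B^{-1}q)_i=-\bigl(M_{BB}^{-1}q_B\bigr)_i$ when $i\in B$, and $(\overline{M}_B^{-1}q)_i=q_i+M_{i,B}\,x_B$ when $i\notin B$; in either case the required number is a ratio of two integer determinants of matrices of size at most~$4$, computable exactly by Cramer's rule. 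Two of the arcs are immediate: $\emptyset\to\{3\}$ is present because $(\overline{M}_\emptyset^{-1}q)_3=q_3=-1<0$, and $\{2\}\to\emptyset$ is present because $(\overline{M}_{\{2\}}^{-1}q)_2=-q_2/m_{22}=-\tfrac1{16}<0$.

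The remaining six arcs are verified in the same way; for example $(\overline{M}_{\{3\}}^{-1}q)_4=q_4-m_{43}q_3/m_{33}=-\tfrac5{28}<0$, $(\overline{M}_{\{1,2\}}^{-1}q)_1=-\tfrac{97}{12483}<0$, and $(\overline{M}_{[4]}^{-1}q)_4=-(M^{-1}q)_4<0$, with the other three coordinates coming out negative as well. I expect no conceptual difficulty in this lemma; the only thing to be careful about is the bookkeeping --- carrying out the eight small eliminations with exact integer arithmetic so that no sign is lost, and observing (again using that $M$ is a P-matrix) that each principal submatrix $M_{BB}$ that appears is nonsingular. Once all eight signs are confirmed to be negative, every arc of the stated sequence lies in $\Or(M,q)$, and since the sequence starts and ends at $\{1,2,3,4\}$ it is a directed cycle, which proves the lemma.
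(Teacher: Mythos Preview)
Your proposal is correct and is precisely the elementary direct verification that the paper alludes to when it writes ``We omit the elementary proof.'' The block-formula $x_B=-M_{BB}^{-1}q_B$, $x_{[n]\setminus B}=q_{[n]\setminus B}+M_{[n]\setminus B,B}x_B$ is the right shortcut, and the sample computations you give ($-1$, $-\tfrac1{16}$, $-\tfrac5{28}$, $-\tfrac{97}{12483}$) check out; the remaining arcs are handled identically, so there is nothing to add.
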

We omit the elementary proof. This implies that $M$ cannot be a
hidden Z-matrix, since for such matrices, $\Or(M,q)$ is the acyclic
digraph of some geometric hypercube in $\R^n$, with edges directed by
a linear function~\cite{Manga}. For the same reason, the tridiagonal
P-matrix $M^T$ cannot be the transpose of a hidden K-matrix (a
hidden Z-matrix that is also a P-matrix)~\cite{Pang}. We remark that
Morris has constructed a family of lower Hessenberg matrices
$M\in\R^{n\times n}$ such that $\Or(M,q)$ is highly cyclic for
suitable $q\in\R^n$~\cite{Morris}.

\section{Beyond Hessenberg matrices}
It is natural to ask whether $\lcp(M,q)$ can still be solved in
polynomial time if $M$ is a matrix of fixed bandwidth (number of
nonzero diagonals), or fixed half-bandwidth (number of nonzero
diagonals above or below the main diagonal); see Figure~\ref{fig:band}.
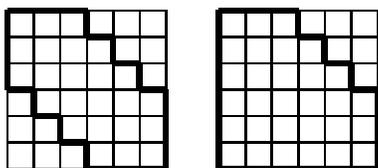
\begin{figure}[htb]
\begin{center}
\begin{picture}(140,60)
% first matrix
% grid
\linethickness{0.5pt}
\put(0,0){\line(1,0){60}}
\put(0,10){\line(1,0){60}}
\put(0,20){\line(1,0){60}}
\put(0,30){\line(1,0){60}}
\put(0,40){\line(1,0){60}}
\put(0,50){\line(1,0){60}}
\put(0,60){\line(1,0){60}}
\put(0,0){\line(0,1){60}}
\put(10,0){\line(0,1){60}}
\put(20,0){\line(0,1){60}}
\put(30,0){\line(0,1){60}}
\put(40,0){\line(0,1){60}}
\put(50,0){\line(0,1){60}}
\put(60,0){\line(0,1){60}}
% nonzero elements
\linethickness{2pt}
% upper chain
\put(0,60){\line(1,0){30}}
\put(30,60){\line(0,-1){10}}
\put(30,50){\line(1,0){10}}
\put(40,50){\line(0,-1){10}}
\put(40,40){\line(1,0){10}}
\put(50,40){\line(0,-1){10}}
\put(50,30){\line(1,0){10}}
\put(60,30){\line(0,-1){30}}
% lower chain
\put(0,60){\line(0,-1){30}}
\put(0,30){\line(1,0){10}}
\put(10,30){\line(0,-1){10}}
\put(10,20){\line(1,0){10}}
\put(20,20){\line(0,-1){10}}
\put(20,10){\line(1,0){10}}
\put(30,10){\line(0,-1){10}}
\put(30,0){\line(1,0){30}}
% second matrix 
% grid
\linethickness{0.5pt}
\put(80,0){\line(1,0){60}}
\put(80,10){\line(1,0){60}}
\put(80,20){\line(1,0){60}}
\put(80,30){\line(1,0){60}}
\put(80,40){\line(1,0){60}}
\put(80,50){\line(1,0){60}}
\put(80,60){\line(1,0){60}}
\put(80,0){\line(0,1){60}}
\put(90,0){\line(0,1){60}}
\put(100,0){\line(0,1){60}}
\put(110,0){\line(0,1){60}}
\put(120,0){\line(0,1){60}}
\put(130,0){\line(0,1){60}}
\put(140,0){\line(0,1){60}}
% nonzero elements
\linethickness{2pt}
% upper chain
\put(80,60){\line(1,0){30}}
\put(110,60){\line(0,-1){10}}
\put(110,50){\line(1,0){10}}
\put(120,50){\line(0,-1){10}}
\put(120,40){\line(1,0){10}}
\put(130,40){\line(0,-1){10}}
\put(130,30){\line(1,0){10}}
\put(140,30){\line(0,-1){30}}
% lower chain
\put(80,0){\line(1,0){60}}
\put(80,0){\line(0,1){60}}
\end{picture}
\end{center}
\caption{Matrices of bandwidth 5 (left) and right half-bandwidth 2
  (right).\label{fig:band}}
\end{figure}

Let $M$ be of fixed right half-bandwidth $t$. Generalizing
Theorem~\ref{thm:dyn}, one can prove that there are only polynomially
many candidates for $B(k)$, \emph{provided that} $B(k)$ has a
\emph{$t$-hole}, meaning that $B(k)$ is disjoint from some contiguous
$t$-element subset of $[k]$.

The only subset of $[k]$ without a $1$-hole is the set $[k]$ itself,
and this is why the lower Hessenberg case $t=1$ is easy. But there is
already an exponential number of subsets of $[k]$ without a $2$-hole.
Hence, the above approach fails for $t\geq 2$. It remains open whether
there is another polynomial-time algorithm in the case of fixed
(right) bandwidth.

\end{document}